\documentclass[11pt]{article}
\usepackage{amsmath,amscd,amsfonts,amssymb,amsthm,enumerate,
textcomp,xr,makeidx,microtype,a4wide,indentfirst}
\usepackage[mathscr]{eucal}

\usepackage[english]{babel}
\usepackage[all,2cell]{xy}
\usepackage[colorlinks,plainpages,urlcolor=blue,linkcolor=reference,citecolor=citation]{hyperref}
\usepackage[abbrev,lite,alphabetic]{amsrefs}

\usepackage{color}
\definecolor{reference}{rgb}{.20,.60,.22}
\definecolor{citation}{rgb}{0,.40,.80}

\UseTwocells
\xyoption{2cell}{\UseTwocells}
\hypersetup{linktocpage}

\makeatletter
\DeclareRobustCommand{\em}{%
  \@nomath\em \if b\expandafter\@car\f@series\@nil
  \normalfont \else \bfseries \fi}
\makeatother

\theoremstyle{definition}

\newtheorem{theorem}{Theorem}[section]
\newtheorem{lemma}[theorem]{Lemma}
\newtheorem{defn}[theorem]{Definition}

\newtheorem{prop}[theorem]{Proposition}
\newtheorem{zam}[theorem]{Remark}

\newtheorem{exs}[theorem]{Example}
\newtheorem{cor}[theorem]{Corollary}

\newtheorem*{Conv}{Conventions}

\newtheorem*{Ackn}{Acknowledgments}

\newcommand{{\Vect}}{\operatorname{\sf Vect}}

\newcommand{\Pic}{\operatorname{Pic}}
\newcommand{\NS}{\operatorname{NS}}
\newcommand{\CH}{\operatorname{CH}}
\renewcommand{\r}{\rightarrow}
\newcommand{\xr}{\xrightarrow}
\newcommand{\Spec}{\operatorname{Spec}}
\newcommand{\bZ}{\mathbb Z}
\newcommand{\Q}{\mathbb Q}
\newcommand{\bQ}{\Q}

\renewcommand{\a}{\alpha}
\renewcommand{\b}{\beta}
\renewcommand{\exp}{\operatorname{exp}}
\newcommand{\id}{\operatorname{Id}}

\DeclareMathOperator{\End}{End}

\begin{document}
\title{{\bfseries Chow Groups of Abelian Varieties and Beilinson's Conjecture}}
\date{}
\author{Bogdan~Zavyalov}
\maketitle
\begin{abstract}
\noindent In the present paper we introduce the property AA of a subsemigroup of the endomorphism semigroup of an abelian variety, which holds for semigroup of endomorphisms of an abelian variety defined over a number field, and show that the orbit of any cycle under a semigroup with property AA in the Chow group $\otimes\Q$ has finite dimensional span.
\end{abstract}
\tableofcontents

\section*{Introduction}

One of the most celebrated conjectures in 20th century algebraic geometry is a conjecture of Beilinson, which in particular predicts that for any variety $X$ defined over a number field the group $\CH(X)\otimes\Q$ has finite dimension. This conjectural picture is completely opposite to the one in complex setting, e.~g., Mumford proved that the Chow group of a smooth complex surface posessing a nonzero holomorphic 2-form is infinitely generated in any reasonable sense. The most na\"{\i}ve way to produce a counterexample to this conjecture is to build a variety $X$ over a number field together with a cycle on it such that its orbit w.~r.~t. {\it all} the endomorphisms of $X$ in the Chow group $\otimes\Q$ contains infinitely many linearly independent elements.

The goal of the present paper is to show that this attempt fails at least for $X$ an abelian variety. Although this fact is arithmetic in its nature, we show indeed that orbit of any cycle under any subsemigroup with the property AA (to be defined in the paper) of endomorphism semigroup (w.~r.~t. composition) has finite dimensional span in $\CH\otimes\Q$. This has some interesting applications even in the case of algebraically closed field. The proof of the fact that property AA holds for the entire endomorphism semigroup over number field is due to Mordell--Weil theorem.

The first part of the paper is a reminder of Fourier--Mukai transform developed by Beauville in his paper \cite{Beau}. The prove itself is given in the second part. The Fourier--Mukai transform allows us to reduce the case of an arbitrary cycle on $A$ to the case of the Poincar\'e line bundle on $A \times \widehat{A}$, and next we deal with the case of divisors explicitly.

\medskip

\begin{Ackn} We would like to thank Marat Rovinsky for suggesting the problem and numerous helpful discussions. Also, we would like to thank Rodion D\'{e}ev for reviewing the first draft of the paper and Dmitrii Pirozhkov without whose assistance this paper might have not been written at all. The research was supported in part by Dobrushin stipend.
\end{Ackn}

\medskip
\begin{Conv}
~
\begin{enumerate}
\item For any abelian variety $A$ and a $k$-rational point $a\in A(k)$ we will denote by $t_a: A \to A$ the translation of $A$ by $a$. We will denote the dual of the abelian variety $A$ by $\widehat{A}$ and the Poincar\'e line bundle by $L \in \Pic(A \times \widehat{A}).$ Moreover, we will denote the first Chern class of $L$ by $l$.
\item For any smooth projective variety $X$ over a field $k$ we will denote by $\CH(X):= \bigoplus_i \CH^i(X)$ the direct sum of all its Chow groups. The cup product enhances the abelian group $\CH(X)$ with a structure of a graded ring. For any proper map $f:X \r Y$ we will denote by $f_*$ the pushforward map on Chow rings (resp. groups) $f_*: \CH(X) \r \CH(Y)$ (resp. $f_*: \CH^i(X) \r \CH^i(Y)$). For any flat morphism $g: X \r Y$ we will denote by $g^*$ the pullback map on Chow rings (resp. groups) $g^*: \CH(Y) \r \CH(X)$ (resp. $g^*: \CH^i(Y) \r \CH^i(X)$). 
\item All the products of varieties $X \times Y$ will actually mean $X \times_{\Spec k} Y$. By $p_{i_1, \dots, i_n}: \prod_{j:=1}^m X_j \r \prod_{k:=1}^{n} X_{i_k}$ we will always denote the corresponding projection map.
\item For any variety $A$ over a field $k$ we will denote by $\End(A)$ the semigroup of its endomorphisms over $\Spec(k)$ w.~r.~t. composition. For an abelian variety $A$ we will denote by $\End_0(A)$ its ring of group endomorphisms over a base field. Note that $\End(A)$ has also a group structure w.~r.~t. addition on $A$, but we will not deal with it (unless the opposite is mentioned explicitly).
\item Subscript $_{\bQ}$ will always mean tensoring by $\bQ$. For example, for any abelian group (resp. ring) $R$, $R_{\bQ}$ will be a $\bQ$-vector space (resp. $\bQ$-algebra) $R\otimes_{\bZ} \bQ$.
\end{enumerate}
\end{Conv}

\section{Prelimanaries}

In this section we will provide the reader with all the well-known facts that will be important in the paper.

\subsection{Picard Group of Abelian Varieties}

 All the results in this section are well-known and could be found in any textbook on abelian varieties, possibly except for Proposition $\ref{split},$ which is not always formulated in such a manner.

Let $A$ be an abelian variety of dimension $g$ over a field $k$. We will need several facts about the structure of its Picard group.

Basic fact from algebraic geometry states that there is a short exact sequence
$$
 0 \r \Pic^0(A) \r \Pic(A) \r  \NS(A) \r 0,
$$
where $\Pic^0(A)$ is a group of algebraically trivial cycles, and $\NS(A)$ is a group of cycles modulo algebraic equivalence. Note that $\NS(A)$ is finitely generated $\bZ$-module for any smooth projective variety. Also, the semigroup of endomorphisms $G:=\End(A)$ acts on each term of this short exact sequence making it into a short exact sequence of $G$-modules. The crucial fact is that after tensoring by $\bQ$ it splits as a short exact sequence of $G_0:=\End_0(A)$-modules.

\begin{defn} A divisor $D \in \Pic(A)$ is called {\it symmetric}, if $[-1]^*D=D.$, and {\it antisymmetric}, if $[-1]^*D=-D.$ Let us denote the group of symmetric (resp. antisymmetric) divisors by $\Pic^+(A)$ (resp. $\Pic^{-}(A)$).
\end{defn}

The following lemmata give us a very good characterization of symmetric and antisymmetric divisors.

\begin{lemma}{\label{sym}} Let $A$ be an abelian variety over a field $k$, $D \in \Pic(A)$. Then the following are equivalent:
\begin{enumerate}
\item $D$ is symmetric,
\item $[n]^*D=n^2D$ for all $[n] \in \bZ$.
\end{enumerate}

\end{lemma}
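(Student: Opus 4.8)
The plan is to reduce everything to the fundamental formula expressing $[n]^*D$ in terms of $D$ and $[-1]^*D$, namely that for every $n \in \bZ$ one has, in $\Pic(A)$ (written additively),
$$[n]^*D = \frac{n^2+n}{2}\,D + \frac{n^2-n}{2}\,[-1]^*D.$$
Granting this identity, the lemma is immediate. For $(1)\Rightarrow(2)$, symmetry means $[-1]^*D=D$; substituting this into the displayed formula collapses the two coefficients into $\frac{(n^2+n)+(n^2-n)}{2}=n^2$, so that $[n]^*D=n^2D$. Conversely, $(2)\Rightarrow(1)$ is trivial: taking $n=-1$ in the hypothesis gives $[-1]^*D=(-1)^2D=D$, which is exactly the symmetry of $D$.

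Thus the real content is the displayed formula, whose derivation rests on the theorem of the cube. First I would record its standard corollary: for any three morphisms $f,g,h\colon X \r A$ from a variety $X$,
$$(f+g+h)^*D-(f+g)^*D-(f+h)^*D-(g+h)^*D+f^*D+g^*D+h^*D=0.$$
I would then specialize to $X=A$ and $f=[n]$, $g=[1]$, $h=[-1]$. Since $f+g+h=[n]$, $f+g=[n+1]$, $f+h=[n-1]$, and $g+h=[0]$ is the constant map to the origin (so that $[0]^*D=0$), while $[1]=\id$, this collapses to the recursion
$$[n+1]^*D=2\,[n]^*D-[n-1]^*D+D+[-1]^*D.$$

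Finally I would solve this recursion. Setting $a_n:=[n]^*D$ and $c:=D+[-1]^*D$, it reads $a_{n+1}-2a_n+a_{n-1}=c$ with initial data $a_0=0$, $a_1=D$, and $a_{-1}=[-1]^*D$. The proposed quadratic $\tfrac{n^2+n}{2}D+\tfrac{n^2-n}{2}[-1]^*D$ has constant second difference equal to $c$ and agrees with these three initial values, so a straightforward induction, run both upward and downward in $n$, establishes the formula for all $n\in\bZ$. The step I expect to be the main obstacle is the careful derivation and specialization of the cube corollary, in particular keeping track of the signs and verifying $[0]^*D=0$; once the formula is in hand, the stated equivalence is purely formal.
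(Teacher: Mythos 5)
Your argument is correct and complete in outline: the cube-corollary specialization to $f=[n]$, $g=[1]$, $h=[-1]$, the resulting recursion, and the closed form $[n]^*D=\tfrac{n^2+n}{2}D+\tfrac{n^2-n}{2}[-1]^*D$ (from which both implications follow, the converse by taking $n=-1$) all check out. The paper itself gives no argument here, only a citation to \cite[Cor.~5.4]{Milne}, and your proof is exactly the standard one underlying that reference, so the approaches coincide.
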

\begin{proof}
\cite[Cor. 5.4]{Milne}
\end{proof}

\begin{lemma}{\label{antisym}} Let $A$ be an abelian variety over a field $k$, $D \in \Pic(A)$. Then the following are equivalent:
\begin{enumerate}
\item $D$ is antisymmetric,
\item $[n]^*D=nD$ for all $[n] \in \bZ$,
\item For every $f,g \in \End_0(A)$ we have $(f+g)^*(D)=f^*(D) + g^*(D)$,
\item $D \in \Pic^0(A)$.
\end{enumerate}
Additionally, if $D$ is antisymmetric, then for each $a\in A(k)$ one has $\ t_a^*(D)=D$. 

\end{lemma}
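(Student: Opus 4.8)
The plan is to prove the four conditions equivalent by running the cyclic chain $(1)\Rightarrow(4)\Rightarrow(3)\Rightarrow(2)\Rightarrow(1)$, which has the advantage of never requiring the explicit cube formula $[n]^*D=\frac{n^2+n}{2}D+\frac{n^2-n}{2}[-1]^*D$ in the hard direction. The two facts I would isolate first are: (a) the identification $\Pic^0(A)=\widehat A(k)$ under which pullback by a group endomorphism $f$ is the dual endomorphism $\widehat f$, so that in particular $[-1]^*$ acts on $\Pic^0(A)$ as negation; and (b) the theorem of the square, which makes each $\phi_D\colon a\mapsto t_a^*D-D$ a homomorphism $A\r\widehat A$, and makes the assignment $D\mapsto\phi_D$ a homomorphism $\Pic(A)\r\Hom(A,\widehat A)$ with kernel exactly the translation-invariant classes $\Pic^0(A)$. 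This realizes $\NS(A)=\Pic(A)/\Pic^0(A)$ as a subgroup of $\Hom(A,\widehat A)$; since homomorphisms of abelian varieties form a torsion-free group, fact (b) shows $\NS(A)$ is torsion-free, which will be the linchpin of the integral statement.

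For $(1)\Rightarrow(4)$ I would first show that $[-1]^*$ acts as the identity on $\NS(A)$. Computing with (a): since $[-1]\circ t_a=t_{-a}\circ[-1]$ one gets $t_a^*[-1]^*D=[-1]^*t_{-a}^*D$, whence $\phi_{[-1]^*D}(a)=[-1]^*\phi_D(-a)=-\phi_D(-a)=\phi_D(a)$, using that $\phi_D$ is a homomorphism and that $[-1]^*=-1$ on $\Pic^0(A)$. Thus $\phi_{[-1]^*D}=\phi_D$, so $[-1]^*D$ and $D$ have the same class in $\NS(A)$. If $D$ is antisymmetric this forces $2[D]=0$ in $\NS(A)$, and torsion-freeness gives $[D]=0$, i.e.\ $D\in\Pic^0(A)$. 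The remaining implications are short: $(4)\Rightarrow(3)$ because on $\Pic^0(A)=\widehat A$ pullback by $f$ is the dual endomorphism $\widehat f$ and $f\mapsto\widehat f$ is additive, giving $(f+g)^*D=\widehat{f+g}(D)=(\widehat f+\widehat g)(D)=f^*D+g^*D$; $(3)\Rightarrow(2)$ by restricting additivity of $f\mapsto f^*D$ to the integer multiples $[n]=[1]+\dots+[1]$, which yields $[n]^*D=nD$ for all $n$; and $(2)\Rightarrow(1)$ by setting $n=-1$. The ``additionally'' clause is then immediate from $(4)$, since every class in $\Pic^0(A)$ is translation-invariant, so $t_a^*D=D$ for all $a\in A(k)$.

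The step I expect to be the genuine obstacle is $(1)\Rightarrow(4)$, and specifically the passage from ``$2D$ is algebraically trivial'' to ``$D$ is algebraically trivial.'' The naive computation only yields $2[D]=0$ in $\NS(A)$; the conclusion $[D]=0$ is what makes the statement integral rather than rational, and it uses something special about abelian varieties, namely the torsion-freeness of $\NS(A)$ coming from the embedding $\NS(A)\hookrightarrow\Hom(A,\widehat A)$. I would therefore take care to establish (or cite precisely) that this embedding exists and that $\Hom(A,\widehat A)$ is torsion-free before invoking it. Everything else is formal manipulation with the homomorphism $\phi_D$ together with the functoriality and additivity of dualization, once facts (a) and (b) are in hand.
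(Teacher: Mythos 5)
Your argument is correct. Note that the paper offers no proof of this lemma at all --- it simply cites \cite[Cor.~5.4 and Remark~8.5]{Milne} --- so the real comparison is with the standard textbook derivation, which gets $(1)\Leftrightarrow(2)$ from the theorem-of-the-cube formula $[n]^*D=\tfrac{n^2+n}{2}D+\tfrac{n^2-n}{2}[-1]^*D$ and treats $(3)$ and $(4)$ separately. Your cyclic chain replaces that computation by the homomorphism $D\mapsto\phi_D$ and the torsion-freeness of $\NS(A)\hookrightarrow\Hom(A,\widehat A)$, which is a genuinely different route, and you correctly isolate the one delicate point: passing from $2[D]=0$ to $[D]=0$ in $\NS(A)$, which is exactly where the integral (rather than rational) statement is earned. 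Two caveats to keep honest. First, your step $(4)\Rightarrow(3)$ leans on the additivity of $f\mapsto\widehat f$, which is essentially statement $(3)$ restricted to $\Pic^0(A)$; it is a standard fact, but it is not free --- it rests on the same seesaw/theorem-of-the-cube input that powers the classical proof, so your argument relocates rather than eliminates that ingredient. Second, you silently identify the paper's $\Pic^0(A)$ (defined there as the algebraically trivial classes) with the translation-invariant classes, i.e.\ with $\ker(D\mapsto\phi_D)=\widehat A(k)$; this equivalence is again standard for abelian varieties but should be cited alongside your facts (a) and (b). With those citations in place the proof is complete, including the final clause, which as you say is immediate from $(1)\Rightarrow(4)$ together with translation-invariance of classes in $\Pic^0(A)$.
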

\begin{proof}
\cite[Cor. 5.4. + Remark 8.5]{Milne}
\end{proof}

These lemmata allows us to construct a splitting of the short exact sequence
\begin{multline}
\label{very_good_sequence}\\ 0 \r \Pic^0(A)_{\bQ} \r \Pic(A)_{\bQ} \xr{r} NS(A)_{\bQ} \r 0 \\  
\end{multline}
\begin{prop}{\label{split}} There is a canonical splitting of \ref{very_good_sequence} as $G_0$-modules given by a map $ \phi: \Pic(A)_{\bQ} \r \Pic^0(A)_{\bQ}$ defined as $\phi(D)=(D-[-1]^*D)/2$. Moreover, $\Pic(A)_{\bQ}\simeq \Pic^+(A)_{\bQ} \oplus \Pic^-(A)_{\bQ}$ as $G_0$-modules as well as $\Pic^+(A)_{\bQ} \simeq \Pic^0(A)_{\bQ}, \ \Pic^-(A)_{\bQ} \simeq \Pic^0(A)_{\bQ}$
\end{prop}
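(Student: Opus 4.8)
The plan is to verify by a direct computation that the explicit formula $\phi(D)=(D-[-1]^*D)/2$ defines a $G_0$-equivariant retraction onto $\Pic^0(A)_{\bQ}$, and then to read off the remaining statements from the symmetric/antisymmetric decomposition that $\phi$ induces.

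First I would check that $\phi$ actually lands in $\Pic^0(A)_{\bQ}$. Since $[-1]\circ[-1]=\id$ we have $([-1]^*)^2=\id$, so applying $[-1]^*$ to $\phi(D)$ gives $[-1]^*\phi(D)=([-1]^*D-D)/2=-\phi(D)$; thus $\phi(D)$ is antisymmetric and hence, by the equivalence of (1) and (4) in Lemma \ref{antisym}, lies in $\Pic^0(A)_{\bQ}$. Next, if $D\in\Pic^0(A)_{\bQ}$ then $D$ is antisymmetric, so $[-1]^*D=-D$ and $\phi(D)=(D+D)/2=D$. Hence $\phi$ restricts to the identity on the sub-object $\Pic^0(A)_{\bQ}$ of \ref{very_good_sequence}, which is exactly the statement that $\phi$ splits the sequence.

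The one structural point to get right is $G_0$-equivariance. Here I would use that every $f\in G_0=\End_0(A)$ is a \emph{group} endomorphism, so $f\circ[-1]=[-1]\circ f$ and therefore $f^*\circ[-1]^*=[-1]^*\circ f^*$; combined with additivity of $f^*$ this yields $\phi(f^*D)=f^*\phi(D)$, i.e. $\phi$ is a map of $G_0$-modules. It is worth recording why the splitting is asserted only over $G_0$ and not over the whole semigroup $G=\End(A)$: a general endomorphism has the form $t_a\circ h$ with $h$ a group endomorphism, and since $[-1]\circ t_a=t_{-a}\circ[-1]$ the operator $[-1]^*$ does not commute with $t_a^*$, so $\phi$ fails to be $G$-equivariant.

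Finally I would extract the decomposition. Put $\psi=\id-\phi$, so $\psi(D)=(D+[-1]^*D)/2$; the same two computations show that $\psi$ is a $G_0$-equivariant idempotent whose image is $\Pic^+(A)_{\bQ}$ and whose kernel is $\Pic^-(A)_{\bQ}=\Pic^0(A)_{\bQ}$ (the last equality being again Lemma \ref{antisym}). Because $\phi+\psi=\id$ and $\Pic^+(A)_{\bQ}\cap\Pic^-(A)_{\bQ}=0$ over $\bQ$ — a divisor with $[-1]^*D=D$ and $[-1]^*D=-D$ is $2$-torsion, hence zero after $\otimes\bQ$ — this gives the $G_0$-module splitting $\Pic(A)_{\bQ}\simeq\Pic^+(A)_{\bQ}\oplus\Pic^-(A)_{\bQ}$. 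Identifying the summands, $\Pic^-(A)_{\bQ}=\Pic^0(A)_{\bQ}$ as just noted, while restricting the surjection $r$ of \ref{very_good_sequence} to the complementary summand $\Pic^+(A)_{\bQ}$, whose intersection with $\Ker r=\Pic^0(A)_{\bQ}$ is zero, yields an isomorphism $\Pic^+(A)_{\bQ}\areq\NS(A)_{\bQ}$. I expect no serious obstacle beyond this bookkeeping; the only subtlety worth flagging is the precise meaning of ``$G_0$-module'', namely the monoid action of $(\End_0(A),\circ)$ by pullback group homomorphisms, since on the symmetric part $[n]\mapsto[n]^*$ acts as multiplication by $n^2$ and so is not linear in the ring structure of $\End_0(A)$.
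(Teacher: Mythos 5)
Your proposal is correct and follows essentially the same route as the paper: both decompose $\Pic(A)_{\bQ}$ via the idempotents $(\id\pm[-1]^*)/2$, identify $\Pic^-(A)_{\bQ}$ with $\Pic^0(A)_{\bQ}$ using Lemma \ref{antisym}, and obtain $\Pic^+(A)_{\bQ}\simeq\NS(A)_{\bQ}$ by restricting $r$. You merely spell out the $G_0$-equivariance (via $f\circ[-1]=[-1]\circ f$) and the $2$-torsion argument for $\Pic^+\cap\Pic^-=0$, which the paper leaves as ``easy to check''.
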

\begin{proof}
First of all, note that the lemma \ref{antisym} says that a natural inclusion of $\Pic^-(A)_{\bQ}$ into $\Pic^0(A)_{\bQ}$ gives us an isomorphism of them as $G_0$-modules. Secondly, we have two projections $p_{\pm}:\Pic(A)_{\bQ} \to \Pic^{\pm}(A)_{\bQ}$, namely $p_{\pm}(D)=\frac{D\pm [-1]^*D}{2}$. It is easy to check that these morphisms are morphisms of $G_0$-modules, so it provides us with a decomposition $\Pic(A)_{\bQ}\simeq \Pic^+(A)_{\bQ} \oplus \Pic^-(A)_{\bQ}$. This implies that $\phi$ is a $G_0$-module section of the short exact sequence. Finally, $\ker r = \Pic^0(A)_{\bQ} = \Pic^-(A)_{\bQ}$ and $\Pic^+(A)_{\bQ} \cap \Pic^-(A)_{\bQ} = 0.$ Thus, by exactness of \ref{very_good_sequence} and the fact that all morphisms are morphisms of $G_0$-modules, we conclude that $\Pic^+(A)_{\bQ} \simeq \NS(A)_{\bQ}.$
\end{proof}

\begin{zam} In particular this statement claims that $[n]^*(\a)=n^2\a$ for every $\a \in \NS(A)_{\bQ}.$ Also, the proof actually shows that there is a morphism of $G_0$-modules $\psi: \NS(A)_{\bQ} \to \Pic(A)_{\bQ}$ defined by $\psi(\a)=(D+[-1]^*D)/2,$ where $D \in \Pic(A)_{\bQ}$ is any representative of $\a \in \NS(A)_{\bQ}.$
\end{zam}

\subsection{Fourier--Mukai Tranform}

\begin{defn}{\label{FM}} Let $A$ be an abelian variety. The Fourier--Mukai transform is a map $F: \CH(A)_{\bQ} \to \CH(\widehat{A})_{\bQ}$ defined by $F(\alpha)= p_{2*}(p_1^*(\alpha) \cdot \exp(l))$ for every $\a \in \CH(A)_{\bQ}$. By $\exp$ we undestand the map defined by formal power series $\exp(t)=1+t+\frac{t^2}{2}+ \dots$. Note that $\exp(l)$ is well-defined because $A$ is of finite dimension.

By $\widehat{F}$ we will denote the Fourier--Mukai tranform on the dual of abelian variety $\widehat{F} \colon \CH(\widehat{A})_{\bQ} \to \CH(A)_{\bQ}.$
\end{defn}

The following theorem is essential in the proof of the Main Theorem. Namely, it will allow us to reduce the general case to the case of divisors.

\begin{theorem}{\label{inv}} Let $A$ be an abelian variety of dimension $g$. Then the following formula holds
$$
\widehat{F}\circ F = (-1)^g[-1]^*
$$
\end{theorem}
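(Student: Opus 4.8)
The plan is to read $\widehat{F}\circ F$ as a composition of correspondences and to identify the resulting kernel with a rational multiple of the graph of $[-1]$. Recall that a class $\gamma\in\CH(A\times A)_{\bQ}$ acts on $\CH(A)_{\bQ}$ by $\alpha\mapsto \pr_{2*}(\pr_1^*\alpha\cdot\gamma)$, where $\pr_1,\pr_2\colon A\times A\to A$ are the two projections, and that the class $[\Gamma]$ of the graph $\Gamma\subset A\times A$ of $[-1]$ acts exactly as $[-1]^*$ (since $[-1]$ is an involution its graph is symmetric, so it induces both $[-1]_*$ and $[-1]^*$, which coincide). As $F$ and $\widehat{F}$ are the correspondences attached to the kernels $\exp(l)$ and $\exp(\widehat{l})$, standard base-change and projection-formula manipulations show that $\widehat{F}\circ F$ is the correspondence attached to
$$K=p_{13*}\bigl(p_{12}^*\exp(l)\cdot p_{23}^*\exp(\widehat{l})\bigr)\in\CH(A\times A)_{\bQ},$$
the projections being those of $A\times\widehat{A}\times A$. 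It therefore suffices to prove $K=(-1)^g[\Gamma]$.

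The first step is to merge the two kernels. Identifying $\widehat{\widehat{A}}$ with $A$, the Poincar\'e bundle on $\widehat{A}\times A$ is the pullback of $L$ along the swap, so $p_{23}^*\widehat{l}$ is the Poincar\'e class attached to the pair $(b,\xi)$ on $A\times\widehat{A}\times A$. Because $L$ restricts to a class of $\Pic^0$ on each fibre $A\times\{\xi\}$, Lemma \ref{antisym} gives $([n]\times\id)^*l=nl$, and the biextension (theorem of the cube) structure of $L$ yields the additivity $p_{12}^*l+p_{23}^*\widehat{l}=\mu^*l$, where $\mu(a,\xi,b)=(a+b,\xi)$. Since $\exp$ sends sums of classes to products, $K=p_{13*}(\mu^*\exp(l))$. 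Changing variables by the automorphism $\tau(a,\xi,b)=(a,\xi,a+b)$ of $A\times\widehat{A}\times A$ turns $\mu$ into the projection $(a,\xi,c)\mapsto(c,\xi)$ and $p_{13}$ into $(a,\xi,c)\mapsto(a,c-a)=g\circ p_{13}$ with $g(a,c)=(a,c-a)$; integrating out the $\widehat{A}$-factor by base change then gives
$$K=g_*\bigl(\pr_2^*\Theta\bigr),\qquad \Theta:=p_{1*}\exp(l)\in\CH(A)_{\bQ},$$
where $p_1\colon A\times\widehat{A}\to A$ is the projection.

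The heart of the proof is the computation $\Theta=(-1)^g[0_A]$, i.e.\ that the Fourier transform of the fundamental class is the signed class of the origin. Using $(\id\times[n])^*l=nl$ (Lemma \ref{antisym} in the $\widehat{A}$-direction), the degree formula $(\id\times[n])_*(\id\times[n])^*=n^{2g}$, and $p_1\circ(\id\times[n])=p_1$, functoriality of pushforward gives $p_{1*}(l^k)=n^{k-2g}p_{1*}(l^k)$ for every $n$, so $p_{1*}(l^k)=0$ unless $k=2g$; hence $\Theta=\tfrac{1}{(2g)!}\,z$ with $z:=p_{1*}(l^{2g})\in\CH^g(A)_{\bQ}$. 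Next, base change along the Cartesian square formed by $[n]\times\id$ and $p_1$ over $[n]\colon A\to A$ gives $[n]^*z=n^{2g}z$ for all $n$. Thus $z$ lies in the $n^{2g}$-eigenspace of $[n]^*$ on $\CH_0(A)_{\bQ}$, which by the Beauville decomposition \cite{Beau} is the line spanned by $[0_A]$; so $z=c\,[0_A]$, and the constant $c$ is the degree of the zero-cycle $z$, computed cohomologically as $c=\int_{A\times\widehat{A}}l^{2g}=(-1)^g(2g)!$ from the $(1,1)$-K\"unneth shape of $l$. Hence $\Theta=(-1)^g[0_A]$.

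Putting these together, $\pr_2^*[0_A]=[A\times\{0\}]$ and $g$ restricts to an isomorphism $A\times\{0\}\cong\Gamma$, $(a,0)\mapsto(a,-a)$, so $K=g_*(\pr_2^*\Theta)=(-1)^g[\Gamma]$ and $\widehat{F}\circ F=(-1)^g[-1]^*$. The main obstacle is the crux $\Theta=(-1)^g[0_A]$: over $\bQ$-coefficients $\CH_0(A)_{\bQ}$ is vast, so knowing only the degree of $p_{1*}(l^{2g})$ is not enough — one must actually localize the cycle at the origin, which is precisely what the $[n]^*$-eigenspace argument accomplishes. The vanishing of the intermediate pushforwards $p_{1*}(l^k)$ for $k\neq 2g$ and the sign in the degree computation are the remaining points to check; all of this is carried out in Beauville \cite{Beau}, whose treatment we follow.
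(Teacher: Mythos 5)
The paper itself does not prove this statement --- it is quoted verbatim from Beauville \cite{Beau} (property F2, p.~647) --- so the comparison is with the standard argument there. Your global architecture is exactly that argument: compose the two correspondences, use the theorem of the cube to rewrite $p_{12}^*l+p_{23}^*\widehat{l}$ as $\mu^*l$, and reduce everything to the single computation $p_{1*}(\exp(l))=(-1)^g[0_A]$. That part is fine; Beauville in fact obtains $K=s^*\bigl(p_{1*}\exp(l)\bigr)$ directly from the Cartesian square of $\mu$ and $p_{13}$ over the sum map $s\colon A\times A\to A$, which is slightly cleaner than your change of variables $\tau$, but the two are equivalent, and $s^*[0_A]$ is indeed $[\Gamma_{[-1]}]$.

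The genuine gap is in the crux. To pin $z=p_{1*}(l^{2g})$ down to a multiple of $[0_A]$ you invoke the claim that the $n^{2g}$-eigenspace of $[n]^*$ on $\CH_0(A)_{\bQ}$ is the line spanned by $[0_A]$, ``by the Beauville decomposition''. But in \cite{Beau} the decomposition $\CH^p(A)_{\bQ}=\bigoplus_s\CH^p_s(A)$, and in particular the identification $\CH^g_0(A)\cong\CH^0_0(\widehat{A})=\bQ$, is deduced from the Fourier formalism and specifically from the inversion formula F2 that you are trying to prove; as written your argument is circular. (Knowing only that $[n]_*z=z$ for all $n$ does not elementarily force $z\in\bQ[0_A]$: the degree and Albanese invariants of $z-c[0_A]$ vanish, but the remaining part of $\CH_0(A)_{\bQ}$ can be enormous.) The standard non-circular way to finish is Grothendieck--Riemann--Roch for $p_1$ together with Mukai's coherent computation $Rp_{1*}L\cong k(0)[-g]$: since the Todd classes of abelian varieties are trivial, $p_{1*}(\ch(L))=\ch(Rp_{1*}L)=(-1)^g\ch(\mathcal{O}_{0})=(-1)^g[0_A]$. (Alternatively, one can use Bloch's theorem that the $(g+1)$st power of the augmentation ideal for the Pontryagin product on $\CH_0(A)_{\bQ}$ vanishes --- a result independent of the Fourier transform --- to show that a degree-zero cycle fixed by all $[n]_*$ is zero.) Your vanishing of $p_{1*}(l^k)$ for $k\neq 2g$ and the sign computation $\int l^{2g}=(-1)^g(2g)!$ are correct, but they only determine the degree of $z$ and cannot substitute for localizing the cycle at the origin.
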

\begin{proof}
\cite[F2 p.647]{Beau}
\end{proof}

%\begin{prop}{\label{B1}} Let $A$ be an abelian variety of dimension $g$. Fix $\a \in \CH^p(A)_{\bQ}$ such that $F(\a)=\sum \eta_q%$ with $\eta_q \in \CH^q(\widehat{A})_{\bQ}.$ Then for any $n \in \bZ$ we have $[n]^*(\eta_q)=n^{g-p+q}.$
%\end{prop}
%\begin{proof}
%Beauville
%\end{proof}

%\begin{prop}{\label{B2}} Let $A$ be an abelian variety of dimension $g$ and $\a \in \CH(\widehat{A})_{\bQ}$. Assume that for every $n \in \bZ$ the following formula holds $[n]^*\a=n^{2q-s}\a,$ then $F(\a) \in \CH^{g-q+s}(\widehat{A})_{\bQ}$
%\end{prop}
%\begin{proof}
%Beauville
%\end{proof}

\section{The Main Theorem}

Before proving the Main Theorem we have to define some class of subsemigroups of the endomorphism semigroup of abelian variety for which it holds. The definition of such subsemigroups will be a little bit technical, but we will provide the reader with some interesting examples of such subsemigroups. 

Recall that the semigroup of endomorphisms $G=\End(A)$ of any abelian variety is a semidirect product of the semigroup of its group endomorphisms $G_0=\End_0(A)$ and translations. If we denote the latter one by $T$, then $G=T \rtimes G_0$.

%We have an inclusion $H/(H \cap G_0) \hookrightarrow G/G_0=T,$ therefore, $H/(H \cap G_0)$ inherits a natural structure of abelian semigroup. 

\begin{defn} We say that a subsemigroup $H \subset G$ has {\it property AA}, if there is a finite number of $k$-rational points $a_1, \dots , a_n$ such that any element of $h \in H$ can be written as $h=f \circ t_{a_n}^{l_n} \circ \dots \circ t_{a_1}^{l_1},$ where $f \in G_0$ and $l_i \in \mathbb Z$.
\end{defn}

\begin{exs} 
\begin{enumerate}
\item The subsemigroup $G_0$ has property AA for trivial reasons.
\item Every finitely generated subgroup $T_0 \subset T$ has property AA.
\item Fix some finitely generated subgroup $T_0 \subset T$. Then a subsemigroup $T_0 \rtimes G_0$ has property AA.
\item (the most interesting example)\label{vajno} Let $A$ be an abelian variety over a number field $k$. Then the semigroup of all endomorphisms of $A$ satisfies the property AA. The Mordell--Weil theorem (\cite[Chapter 4]{Serre}) claims that for any abelian variety over number field $A(k)$ is a finitely generated group. It reduces this example to the third one.
\end{enumerate}
\end{exs}

\begin{theorem}{\label{mtheorem}}[The Main Theorem] Let $A$ be an abelian variety over a field $k$. Fix some subsemigroup $H \subset G$ that satisfies the property AA. Then for any cycle $\a \in \CH^p(A)_{\bQ}$ its orbit under the action of the semigroup $H$ spans a finite-dimensional $\bQ$-vector space.
\end{theorem}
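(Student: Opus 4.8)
The plan is to use property AA to split the action into a translation part and a $G_0$-part, to transport the whole orbit to $\widehat A$ via the Fourier--Mukai transform $F$ (which is injective by Theorem \ref{inv}, since $\widehat F\circ F=(-1)^g[-1]^*$ and $[-1]^*$ is an involution, so $(-1)^g[-1]^*\widehat F$ is a left inverse of $F$), and there to reduce finite-dimensionality to the already understood case of \emph{divisors} on the abelian variety $B:=A\times\widehat A$. I take the action to be by pullback, $h\mapsto h^*$. First I would unwind property AA: a composition of translations is a translation, so $t_{a_n}^{l_n}\circ\dots\circ t_{a_1}^{l_1}=t_b$ with $b=\sum_i l_i a_i$ ranging over the finitely generated group $\Lambda:=\langle a_1,\dots,a_n\rangle\subseteq A(k)$. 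Thus every $h\in H$ is $h=f\circ t_b$ with $f\in G_0$, $b\in\Lambda$, whence $h^*=t_b^*\circ f^*$, and it suffices to show that $\{F(t_b^*f^*\alpha) : f\in G_0,\ b\in\Lambda\}$ spans a finite-dimensional subspace of $\CH(\widehat A)_{\bQ}$.

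Next I would record two intertwining relations. For translations, using that $t_{-b}^*$ acts trivially on $\Pic^0(A)$ (Lemma \ref{antisym}) together with a seesaw argument, one gets $(t_{-b}\times\id)^*l=l+p_2^*\eta_{-b}$ for a class $\eta_b\in\Pic^0(\widehat A)_{\bQ}$ depending linearly on $b$; the projection formula then yields $F(t_b^*\gamma)=F(\gamma)\cdot\exp(-\eta_b)$. For $f\in G_0$, writing $\widehat f\colon\widehat A\to\widehat A$ for the dual morphism, the defining identity $(f\times\id)^*l=(\id\times\widehat f)^*l=:l_f$ together with base change along the cartesian square formed by $p_1$ and $f$ gives the closed formula $f^*\alpha=(-1)^g[-1]^*p_{1*}\big(p_2^*F(\alpha)\cdot\exp(l_f)\big)$, in which $l_f=(\id\times\widehat f)^*l$ is a \emph{divisor} class on $B$ and all the $f$-dependence is concentrated in $\exp(l_f)$.

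The heart of the argument is then the finite-dimensionality of $\{f^*\alpha : f\in G_0\}$, which by the closed formula follows from that of $\{\exp(l_f)\}$. Here I would exploit that pullback is a \emph{ring homomorphism}: $l_f^i=(\id\times\widehat f)^*(l^i)$, so it suffices to bound the orbit of the single divisor class $l$ under pullback by the endomorphisms $\id\times\widehat f$ of $B$. This is precisely the divisor case. Writing $l=l^++l^0$ in $\Pic(B)_{\bQ}=\Pic^+(B)_{\bQ}\oplus\Pic^0(B)_{\bQ}$ (Proposition \ref{split}), the symmetric part lands in $\Pic^+(B)_{\bQ}\simeq\NS(B)_{\bQ}$, which is finite-dimensional, while on $\Pic^0(B)_{\bQ}$ the map $g\mapsto g^*l^0$ is additive in $g$ (Lemma \ref{antisym}(3)) and hence factors through the finite-dimensional $\bQ$-algebra $\End_0(B)_{\bQ}$. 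Therefore $\{(\id\times\widehat f)^*l\}$ spans a finite-dimensional $V_1\subseteq\CH^1(B)_{\bQ}$; since $\CH(B)_{\bQ}$ vanishes above degree $2g$, every $l_f^i$ lies in the finite-dimensional subalgebra generated by $V_1$, so $\{\exp(l_f)\}$, and thus $\{f^*\alpha\}$, is finite-dimensional.

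It remains to reincorporate translations. As $\Lambda$ is finitely generated, the classes $\eta_b$ all lie in the finite-dimensional subalgebra of $\CH(\widehat A)_{\bQ}$ generated by $\eta_{a_1},\dots,\eta_{a_n}$ (again using the bounded degree of $\CH(\widehat A)_{\bQ}$), so $\{\exp(-\eta_b)\}$ is finite-dimensional; multiplying this by the finite-dimensional span of $\{F(f^*\alpha)\}$ inside the bounded-degree ring $\CH(\widehat A)_{\bQ}$ keeps $\{F(t_b^*f^*\alpha)\}$ finite-dimensional, and injectivity of $F$ concludes. The step I expect to be the main obstacle is exactly the one the Fourier--Mukai transform is designed to overcome: the assignment $f\mapsto f^*$ is \emph{not} additive on higher-codimension cycles, so the orbit cannot be presented directly as a cyclic module over $\End_0(A)_{\bQ}$; passing through $F$ trades this non-additive action for the multiplicative pullback action on powers of the single divisor $l$, where additivity on $\Pic^0$ (Lemma \ref{antisym}) and finiteness of $\NS$ are available. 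The remaining care is bookkeeping with base change and the projection formula, and checking that none of these identities require $f$ to be an isogeny.
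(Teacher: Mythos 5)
Your argument is correct, and its backbone coincides with the paper's: invert the Fourier--Mukai transform, push everything through the base-change square for $p_1$ and the fact that pullback is a ring homomorphism, so that the whole problem collapses onto the orbit of the single divisor class $l$ on $A\times\widehat A$, and then settle the divisor case using the splitting of Proposition \ref{split}, finite generation of $\NS$, and additivity of $g\mapsto g^*$ on $\Pic^0$ (Lemma \ref{antisym}) combined with finite generation of $\End_0$. Where you genuinely diverge is the treatment of translations. The paper stays upstairs: for divisors it uses $t_a^*(\a)-\a\in\Pic^0(A)_{\bQ}$ together with translation-invariance of $\Pic^0$ (its Step 2), and in the general case it simply folds the translations into the auxiliary semigroup $H'=\{\overline{g'}\}\subset\End(A\times\widehat A)$, which again has property AA, so that Step 2 applies verbatim to $l$. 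You instead prove the intertwining relation $F(t_b^*\gamma)=F(\gamma)\cdot\exp(\eta_{-b})$ and dispose of translations on the dual side as multiplication by exponentials of $\Pic^0(\widehat A)_{\bQ}$-classes ranging over a finite-dimensional space. Your route makes the ``Fourier--Mukai exchanges translations with twists'' philosophy explicit and yields a cleaner closed formula for the orbit, but it imports an extra standard input (the seesaw computation $(t_b\times\id)^*l=l+p_2^*\eta_b$ with $\eta_b$ additive in $b$, which you assert rather than prove) that the paper's more economical bookkeeping avoids entirely; likewise your appeals to the dual morphism $\widehat f$ and to injectivity of $F$ are harmless but unnecessary, since $(f\times\id)^*$ works just as well and the explicit inversion $(-1)^g[-1]^*\widehat F$ already transports finite-dimensionality back to $A$.
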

\begin{proof} 

We are going to prove the theorem in three steps. Firstly, we will prove it for $H=G_0$ and $p=1$. Secondly, we will reduce the case of any subsemigroup with property AA and $p=1$ to the case $H=G_0.$ Finally, we will reduce the general case to the case $p=1$. 

\textbf{Step 1.} Assume that $p=1$ and $H=G_0.$ We will prove that for every finite-dimensional vector space $V \subset \Pic(A)_{\bQ}$ its orbit under the action of $G_0$ spans finite-dimensional vector space.

According to Proposition \ref{split}, we have a split short exact sequence of $G_0$-modules
$$
 0 \to \Pic^0(A)_{\bQ} \to \Pic(A)_{\bQ} \to \NS(A)_{\bQ} \to 0.
$$ 

Since this short exact sequence of $G_0$-modules splits and $\NS(A)_{\bQ}$ is of finite dimension, we can assume that $V \subset \Pic^0(A)_{\bQ}$. From now on we will consider $G_0$ as a group under addition instead of considering it as semigroup under composition. Lemma \ref{antisym} tells us that the action of $G_0$ is linear (w.r.t. to this group structure) on $\Pic^0(A)_{\bQ}$. Combined with the fact that $G_0$ is a finitely generated group (\cite[Prop. 10.5 + Lemma 10.6]{Milne}), we conclude that the orbit of the vector subspace $V$ under the action of $G_0$ spans a finite-dimensional vector space.

\textbf{Step 2.} Now assume that $p=1$, but semigroup $H$ is any subsemigroup of $G$ with the property AA. According to the definition, we can choose a finite number of $k$-rational points $a_1, \dots, a_n \in A(k)$ such that any element of $g \in H$ can be written in the following form $g=f \circ t_{a_n}^{l_n} \circ \dots  \circ t_{a_1}^{l_1}$ with $f \in G_0$. We know \cite[Lemma 8.8]{Milne} that $\b_i:=t_{a_i}^*(\a)-\a \in \Pic^0(A)_{\bQ}$ for any $0<i \leq n.$ Since all $t_{a_i}$ commute and $t_a^*(\b)-\b=0$ for any $\b\in \Pic^0(A)_{\bQ}$(Lemma \ref{antisym}), we conclude that $(t_{a_n}^{l_n})^* \circ \dots  \circ (t_{a_1}^{l_1})^* (\a)= \a -l_1\b_1 - \dots -l_n\b_n.$ In particular, the vector space $V$ generated by the orbit of $\alpha$ under the action of endomorphisms of the form $t_{a_n}^{l_n} \circ \dots  \circ t_{a_1}^{l_1}$ is of finite dimension. Therefore, since $H$ has property AA, it suffices to show that the orbit of $V$ under the action of $G_0$ spans finite-dimensional vector space. But it has been already done in the Step 1.

\textbf{Step 3.} Finally, we are going to reduce the general case to the case of a divisor on the abelian variety $A\times \widehat{A}.$

Theorem \ref{inv} says that $\a = (-1)^g[-1]^*\widehat{F}(F(\a)).$ Let $F(\a)=\sum_{q=1}^{g} \eta_q$ with $\eta_q \in \CH^q(\widehat{A})_{\bQ}.$ Again, choose finite number of $k$-rational points $a_1, \dots, a_n \in A(k)$ such that any element of $g \in H$ can be written in the following form $g=f \circ t_{a_n}^{l_n} \circ \dots  \circ t_{a_1}^{l_1}$ with $f \in G_0$. Choose such a representation for each $g \in H$ and denote by $g':=f \circ t_{-a_n}^{l_n} \circ \dots  \circ t_{-a_1}^{l_1}.$ Also we will denote by $\overline g$ the morphism $g \times \id_{\widehat{A}}: A\times \widehat{A} \to A\times \widehat{A}$. Now, for any $g \in H$ we have
$$
g^*(\a)=(-1)^g(g^* \circ [-1]^* \circ \widehat{F} \circ F )(\a)=(-1)^g(g^* \circ [-1]^* \circ \widehat{F})(\sum_{q=1}^g \eta_q)= (-1)^g\sum_{q=1}^{g}g^*([-1]^* \circ \widehat{F}(\eta_q))
$$

This expression allows us to conclude that in order to prove the Theoreom for a cycle $\a$ it is sufficient to prove it for every $[-1]^* \circ \widehat{F}(\eta_q)$. From now on we are going to prove the Theorem in this case.

Note that for any $g \in H$ we have that $g^*\circ [-1]^*=[-1]^*g'$. Also the base change formula applied to the following Cartesian diagram asserts that $f^* \circ p_{1*}=p_{1*}\circ(\overline f)$ for every $f \in G$.
$$
\begin{CD}
 A \times \widehat{A} @>\overline f>> A \times \widehat{A}\\
@Vp_1VV @Vp_1VV \\
A @>f>> A 
\end{CD}
$$

We conclude that

\begin{equation*}
\begin{split}
g^*([-1]^* \circ \widehat{F}(\eta_q))=g^* \circ [-1]^*(p_{1*}(p_2^*(\eta_q) \cdot \exp(l)))=[-1]^*\circ g'^*(p_{1*}(p_2^*(\eta_q) \cdot \exp(l)))= \\
=[-1]^*\circ p_{1*}(\overline g'^*(p_2^*(\eta_q) \cdot \exp(l)))=[-1]^* \circ p_{1*}(p_2^*(\eta_q) \cdot \exp(\overline g'^*(l))) =\\
=[-1]^* \circ p_{1*}(p_2^*(\eta_q) \cdot (\sum_{i=1}^{\infty}\overline g'^*(l^n)))=\sum_{i=1}^{\infty} ([-1]^* \circ p_{1*}(p_2^*(\eta_q) \cdot \overline g'^*(l^n)))
\end{split}
\end{equation*}

The sum is actually finite because $A\times \widehat{A}$ is a variety of dimension $2g$, so $l^n=0$ for $n>2g$. Since cup-product, pullbacks and pushforwards are linear maps, it is enough to show that the orbit of each $l^n$ under the action of elements of the form $\overline g'^*$ spans a finite-dimensional vector space. Moreover, since $\overline g'^*(l^n)=\overline g'^*(l)^n$ it is enough to show it only for $l$. Every element of the form $\overline g'$, by definition, can be written in the form 
$$
\overline g'=(f \times \id) \circ (t^{k_n}_{-a_n} \times \id) \circ \dots \circ (t^{k_1}_{-a_1} \times \id)=(f \times \id) \circ t^{k_n}_{(-a_n,0)}  \circ \dots \circ t^{k_1}_{(-a_1,0)}
$$

Since $f\in \End_0(A)$, we have $f\times \id \in \End_0(A\times \widehat{A}).$ Therefore, the subset $$H'=\{f \in \End(A\times \widehat{A})| \exists g \in H \colon f = \overline g'\}$$ is a subsemigroup and has property AA. Therefore, we can apply the Step 2 to $H'$ and $l$ and finish the proof.
\end{proof}

\begin{cor} Let $A$ be an abelian variety over any field $k$. Fix some cycle $\b \in \CH^p(A)_{\bQ}$ and a $k$-rational point $x \in A(k),$ then the set $\{\a, t_x^*(\a), (t^*_x)^2(\a), \dots\}$ spans finite dimensional vector space.
\end{cor}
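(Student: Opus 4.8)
The plan is to recognize this statement as an immediate specialization of the Main Theorem: I would exhibit the given set as the orbit of the cycle under a subsemigroup of $G$ with property AA, and then quote Theorem \ref{mtheorem} directly. (Here I read the cycle denoted $\b$ in the statement as the cycle $\a$ appearing in the set $\{\a, t_x^*(\a), \dots\}$.)

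First I would consider the subsemigroup $H \subset G = \End(A)$ generated under composition by the single translation $t_x$, namely
$$
H = \{t_x^l : l \geq 0\} = \{t_{lx} : l \geq 0\},
$$
where $t_x^0 = \id$. This is closed under composition, since $t_x^a \circ t_x^b = t_x^{a+b}$, so it is genuinely a subsemigroup (in fact a submonoid) of $G$.

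Next I would check that $H$ has property AA. Taking the single $k$-rational point $a_1 := x$ together with $f := \id \in G_0$, every element $t_x^l \in H$ is expressed as $f \circ t_{a_1}^{l_1}$ with $l_1 = l \in \bZ$, which is exactly the shape demanded by the definition of property AA. Hence $H$ satisfies property AA with the one point $x$. Finally, since pullback is contravariant and $t_x$ commutes with itself, one has $(t_x^l)^* = (t_x^*)^l$, so the orbit of $\a$ under $H$ is precisely
$$
\{h^*(\a) : h \in H\} = \{(t_x^*)^l(\a) : l \geq 0\} = \{\a, t_x^*(\a), (t_x^*)^2(\a), \dots\}.
$$
Applying Theorem \ref{mtheorem} to $\a$ and $H$ then shows this orbit spans a finite-dimensional $\bQ$-vector space, which is the claim.

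There is essentially no obstacle here, as all of the real work lives in the Main Theorem; the only thing to observe is the trivial fact that a single translation generates a subsemigroup with property AA. Should one wish to bypass even the formal appeal to Theorem \ref{mtheorem}, this case is already covered verbatim by the computation in its Step 2 specialized to $n = 1$ and $a_1 = x$, where one obtains $(t_x^{l})^*(\a) = \a - l\,\b_1$ with $\b_1 := t_x^*(\a) - \a \in \Pic^0(A)_{\bQ}$ in the divisorial case, and the general case follows through Step 3 exactly as before.
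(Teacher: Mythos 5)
Your proposal is correct and matches the paper's own proof, which simply applies Theorem \ref{mtheorem} to the subsemigroup generated by $t_x$ (written there as $H=\bZ t_x^*$); your version just spells out the routine verification that this subsemigroup has property AA. No further comment is needed.
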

\begin{proof}
Apply Theorem \ref{mtheorem} to $H=\bZ t_x^*$ and $\a=\b$.
\end{proof} 

\begin{cor} Let $A$ be an abelian variety over any field $k$. Fix some cycle $\b \in \CH^p(A)_{\bQ}$, then the orbit of $\b$ under the action of the semigroup $\End_0(A)$ spans finite dimensional vector space.
\end{cor}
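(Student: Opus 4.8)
The plan is to observe that this corollary is an immediate specialization of the Main Theorem (Theorem \ref{mtheorem}), obtained by taking the semigroup $H = \End_0(A) = G_0$. The only thing I need to check is that $G_0$ itself satisfies property AA when regarded as a subsemigroup of $G = \End(A)$. This is precisely the first item in the list of Examples above: one takes the empty collection of $k$-rational points (so $n = 0$), and then every element $h \in G_0$ is already written in the required form $h = f$ with $f \in G_0$ and no translation factors. Hence property AA holds for trivial reasons.

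With property AA verified, I would simply invoke Theorem \ref{mtheorem} with $H = G_0$ and $\a = \b$, which yields directly that the orbit $\{g^*(\b) : g \in G_0\}$ spans a finite-dimensional $\bQ$-vector space. Unlike the preceding corollary, whose semigroup $\bZ t_x^*$ consists entirely of powers of a single translation, here $H = G_0$ exercises Step 1 of the proof of the Main Theorem in full: the linearity of the $G_0$-action on $\Pic^0(A)_{\bQ}$ furnished by Lemma \ref{antisym}, together with the finite generation of $G_0$ as a group.

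There is no genuine obstacle to surmount at the level of this corollary, since all the work has already been carried out in establishing the Main Theorem. The one subtlety worth flagging is that for $p > 1$ the finiteness is not visible directly on $\CH^p(A)_{\bQ}$; it relies on the Fourier--Mukai reduction of Step 3, which transports the problem to the orbit of the first Chern class $l$ of the Poincar\'e bundle under a property-AA subsemigroup of $\End(A \times \widehat{A})$, where the divisor case (Steps 1 and 2) then applies. Thus the brevity of this proof is entirely inherited from the depth of Theorem \ref{mtheorem}.
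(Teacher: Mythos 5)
Your proof is correct and follows exactly the paper's route: the paper likewise just applies Theorem \ref{mtheorem} with $H=\End_0(A)$ and $\a=\b$, relying on the first item of the Examples for the fact that $G_0$ has property AA. The additional commentary on which steps of the Main Theorem's proof are exercised is accurate but not needed.
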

\begin{proof}
Apply Theorem \ref{mtheorem} to $H=\End_0(A)$ and $\a=\b$.
\end{proof}

\begin{cor} Let $A$ be an abelian variety over number $k$. Fix some cycle $\b \in \CH^p(A)_{\bQ}$. Then the orbit of $\b$ under the action of the group of all the endomorphism semigoup $\End(A)$ spans a finite-dimensional vector space.
\end{cor}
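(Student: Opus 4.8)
The plan is to deduce this immediately from the Main Theorem (Theorem \ref{mtheorem}) by verifying that the full endomorphism semigroup $\End(A)$ satisfies property AA when $k$ is a number field. The entire content of the corollary lies in the observation that the number-field hypothesis is exactly what places the whole semigroup $G = \End(A)$ within the scope of Theorem \ref{mtheorem}; once that is checked, nothing further is needed.

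First I would recall the semidirect product decomposition $G = T \rtimes G_0$, where $T$ denotes the semigroup of translations and $G_0 = \End_0(A)$ the ring of group endomorphisms. The semigroup $T$ is canonically identified with the group $A(k)$ of $k$-rational points via $a \mapsto t_a$. By the Mordell--Weil theorem (\cite[Chapter 4]{Serre}), $A(k)$ is a finitely generated abelian group, so $T$ is finitely generated. Choosing a finite generating set $a_1, \dots, a_n$ of $A(k)$, every translation is a product of powers of the $t_{a_i}$, and hence every $h \in G$ can be written as $h = f \circ t_{a_n}^{l_n} \circ \dots \circ t_{a_1}^{l_1}$ with $f \in G_0$ and $l_i \in \bZ$. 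This is precisely the assertion that $G = \End(A)$ has property AA, which is Example \ref{vajno} (reduced there to the third example via Mordell--Weil).

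Having established property AA for $H = \End(A)$, I would simply apply Theorem \ref{mtheorem} to $H = \End(A)$ and the cycle $\alpha = \beta$, concluding that the orbit of $\beta$ spans a finite-dimensional $\bQ$-vector space. I do not expect any genuine obstacle: the Main Theorem was engineered precisely so that this single arithmetic input, the finite generation of $A(k)$, makes the corollary automatic. The only point worth emphasizing is that the argument really does consume the number-field hypothesis, since Mordell--Weil can fail over a general base field; without finite generation of $A(k)$ the translation part $T$ of $G$ need not satisfy property AA, and the conclusion would no longer follow from Theorem \ref{mtheorem}.
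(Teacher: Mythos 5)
Your proposal is correct and matches the paper's own proof, which likewise cites Example \ref{vajno} (property AA for $\End(A)$ via Mordell--Weil) and then applies Theorem \ref{mtheorem} to $H=\End(A)$ and $\a=\b$. Your additional unwinding of why Mordell--Weil gives property AA is just the content of that example, so no new argument is involved.
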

\begin{proof}
According to Example \ref{vajno}, $\End(A)$ has property AA. Thus we can apply Theorem \ref{mtheorem} to $H=\End(A)$ and $\a=\b$.
\end{proof}

\begin{bibdiv}
\begin{biblist}

\bib{Beau}{article}{
      author={Beauville, A.},
       title={Sur l'anneau de Chow d'une vari\'et\'e ab\'elienne},
        date={1986},
     journal={Math. Ann.},
      volume={273},
      number={4},
       pages={647-651},
}

\bib{Milne}{book}{
editor={ Milne, J.}
title={Abelian Varieties}
publisher={Lecture notes}
date={2008}
eprint={http://www.jmilne.org/math/CourseNotes/AV.pdf},

}

\bib{Serre}{book}{
      author={Serre, J.P.},
       title={Lectures on the Mordell--Weil theorem},
        date={1989},
        publisher={Aspects of Mathematics, E15. Friedr. Vieweg Sohn, Braunschweig.}
}

\end{biblist}
\end{bibdiv}

\end{document}